\documentclass{amsart}
\usepackage{xcolor,enumerate}       
 \definecolor{darkgreen}{rgb}{0,0.5,0}
\usepackage{blindtext}
\usepackage{mathtools}
\usepackage{tikz}
\usetikzlibrary{arrows.meta,calc,positioning,decorations.pathmorphing}

\usepackage[T1]{fontenc}    
\usepackage{hyperref}       
\usepackage{url}          

\usepackage[margin=1in]{geometry}
\usepackage[T1]{fontenc}
\usepackage[utf8]{inputenc}
\usepackage{lmodern}
\usepackage{microtype}
\usepackage{comment}
\usepackage[ruled,vlined]{algorithm2e}

\usepackage{amsmath,amssymb,amsthm,amsfonts,mathtools,bm}
\usepackage{mathrsfs}
\usepackage{enumitem}
\usepackage{tikz-cd}
\usepackage{bbm}
\usepackage{dsfont}
\usepackage{hyperref}
\usepackage[nameinlink,capitalize]{cleveref}
\usepackage{xcolor}
\usepackage{graphicx}     
\usepackage{subcaption}   
\usepackage{caption}
\usepackage{graphicx} 
\newtheorem{theorem}{Theorem}[section]
\newtheorem{proposition}[theorem]{Proposition}
\newtheorem{lemma}[theorem]{Lemma}

\theoremstyle{definition}
\newtheorem{definition}[theorem]{Definition}
\newtheorem{assumption}[theorem]{Assumption}

\newtheorem{example}[theorem]{Example}

\newcommand{\R}{\mathbb{R}}

\newcommand{\tr}{\mathrm{tr}}

\newcommand{\diver}{\mathrm{div}}
\newcommand{\ol}{\overline}
\newcommand{\eps}{\varepsilon}
\newcommand{\Om}{\Omega}
\newcommand{\p}{\partial}

\newcommand{\norm}[1]{\left\lVert #1\right\rVert}

\newcommand{\dd}{\,\mathrm{d}}
\newcommand{\map}[3]{#1:#2 \rightarrow #3}

\title{Reflected Schr\"odinger Bridge Problem
over Sub-Riemannian Manifold}
\author{Daniel Owusu Adu}
\date{}

\begin{document}

\begin{abstract}
We formulate and solve the reflected sub-Riemannian Schr\"odinger bridge (SB) problem:
minimum-energy transport of probability distributions on a bounded domain
for underactuated dynamics with degenerate diffusion and hard state
constraints. The main difficulties are geometric and degeneracy: Euclidean normal reflection
is generally incompatible with the horizontal subbundle, since it may
push the process in directions not generated by the admissible control and
noise fields. Also, the reference measure may not have full support. We address these by introducing an intrinsic oblique reflection
mechanism that is compatible to the sub-Riemannian structure. Under H\"ormander and non-characteristic boundary assumptions, we prove
that the reflected degenerate reference process admits a smooth, strictly
positive transition density. The resulting optimal control is characterized
by a forward--backward PDE system with asymmetric boundary conditions:
an oblique Neumann condition for the backward factor and a normal no-flux
condition for the forward factor. Since explicit transition densities are generally
not available in this setting, we develop a
PDE-based Sinkhorn iteration that enforces these boundary conditions
directly. Our examples reveal that SB depends on the topology of the domain and the geometry of the diffusion.
\end{abstract}
\maketitle

\section{Introduction} 
Motivated by the task of redistributing an ensemble of underactuated agents within a confined workspace while minimizing energy
expenditure~\cite{caluya2021reflected}, and by generative modeling
problems where transport must respect data domains with boundaries~\cite{deng2024reflected}, we study a Schr\"odinger bridge problem (SBP)~\cite{schrodinger1931umkehrung}, where the diffusion is constrained on a sub-Riemannian manifold $(\overline{\mathcal{X}},\mathrm D,\mathrm d)$. Here $\mathcal{X} := \{x \in \mathbb{R}^d : \xi(x) > 0\}$ is an open bounded domain with compact boundary $\partial\mathcal{X} := \{x \in \mathbb{R}^d : \xi(x) = 0\}$, where $\xi \in C^{\infty}(\mathbb{R}^d)$.  Throughout, we assume that $\nabla\xi \neq 0$ on 
$\partial\mathcal{X}$ so that the inward Euclidean unit normal is 
\begin{equation}\label{eq: normal unit vector}
n(x) := \nabla\xi(x)/\|\nabla\xi(x)\|.  
\end{equation}
We endow $\overline{\mathcal X}$ with an equiregular (see~\cite[pp.559]{agrachev2019comprehensive} for definition) horizontal subbundle
\begin{equation}\label{eq: horizontal subbundle}
\mathrm{D}(x) := \operatorname{span}\{g_1(x), \dots, g_m(x)\} \subset \mathbb{R}^d
\end{equation}
together with the sub-Riemannian metric $\mathrm d_x(v,v) = v^\top G(x)^\dagger v$, for \(v\in \mathrm D(x)\) and $x\in\overline{\mathcal{X}}$. Here $G^\dagger(x)=(g(x) g(x)^\top)^\dagger$ denotes the Moore-Penrose pseudoinverse of $G(x)$ at $x\in\overline{\mathcal{X}}$, where  $\map{g}{\overline{\mathcal{X}}}{\mathbb{R}^{d \times m}}$ with $m < d$ has column $g_i$ in~\eqref{eq: horizontal subbundle} . Let  $\Omega := C([0,t_f]; \overline{\mathcal{X}})$ denote the path space and  $\mu_0,\mu_f\in\mathcal{P}(\overline{\mathcal{X}})$ admit strictly positive densities with respect to Popp measure $\mathrm{m}$  on $(\overline{\mathcal{X}},\mathrm D,\mathrm d)$~\cite[Chapter~20]{agrachev2019comprehensive}. For $\epsilon>0$, let $\mathrm{R}^{\epsilon,\mu_0}\in \mathcal{P}(\Omega)$  be a reference measure supported on 
\begin{equation}\label{eq:reference_SDE}
dX_t = b_\epsilon(X_t)dt + \sqrt{\epsilon}\,g(X_t)dW_t + r(X_t)d\eta_t,~~ X_0 \sim \mu_0,
\end{equation}
whenever the solution exists. Here: $X_t \in \mathbb{R}^d$ denotes the state of the process at time $t$; $b_\epsilon(x) := \frac{\epsilon}{2}\sum_{i=1}^m \nabla_{g_i}g_i(x)$ is the Stratonovich-to-It\^o correction drift; $\{W_t\}_{t \ge 0} \subset \mathbb{R}^m$ is the standard Brownian motion;  $\eta_t$ is the boundary regulating process~\cite{Pilipenko2014}. It is the
continuous nondecreasing process, starting from $\eta_0=0$ and satisfies $\eta_t=\int_0^t \mathbf 1_{\partial\mathcal X}(X_s)\,d\eta_s$. The process
$\eta_t$ increases only when $X_t\in\partial\mathcal X$; and $\map{r}{\partial\mathcal{X}}{\R^d}$ is the reflection term. Thus, for any Borel set $A\subset \overline{\mathcal{X}}$, we have that $((\mathrm{ev}_0)_\# \mathrm{R}^{\epsilon,\mu_0})(A):=\mathrm{R}^{\epsilon,\mu_0}(\mathrm{ev}_0^{-1}(A))=\mu_0(A)$,
where $\map{\mathrm{ev}_t}{\Omega}{\overline{\mathcal{X}}}$ is the evaluation map at $t \in [0,t_f]$. The SBP for our setup is
\begin{equation}\label{eq:sbp}
\inf_{\mathrm{P} \in \mathcal{C}_{\mathrm{SB}}(\mathrm{R}^{\epsilon,\mu_0},\mu_f)} \, 
 \mathbb{E}_{\mathrm{P}}\!\left[ \log \frac{d\mathrm{P}}{d\mathrm{R}^{\epsilon,\mu_0}} \right],
\end{equation}
where 
\[
\mathcal{C}_{\mathrm{SB}}(\mathrm{R}^{\epsilon,\mu_0},\mu_f)
:= \Big\{\, \mathrm{P} \in \mathcal{P}(\Omega) : 
\mathrm{P} \ll \mathrm{R}^{\epsilon,\mu_0}, \;\;
(\mathrm{ev}_0)_\# \mathrm{P} = \mu_0, \;\;
(\mathrm{ev}_{t_f})_\# \mathrm{P} = \mu_f \,\Big\}.
\]
Here, 
the absolutely continuous constraint $\mathrm{P} \ll \mathrm{R}^{\epsilon,\mu_0}$ ensures that the Radon--Nikodym derivative $d\mathrm{P}/d\mathrm{R}^{\epsilon,\mu_0}$ exists, and the pushforward conditions enforce the prescribed end-time marginals. From~\cite{chen2016relation,dai1991stochastic,Leonard2014Survey}, by Girsanov transformation~\cite[Chapter~8.6]{oksendal2003stochastic}, the SBP in~\eqref{eq:sbp} is equivalent to
\begin{equation}\label{eq: into_minimum_energy}
\inf_{u \in \mathcal{U}}\mathbb{E}_{\mathrm{P}}\left[\int_0^{t_f} \frac{1}{2\epsilon}\|u(t,X_t)\|^2 dt\right],
\end{equation}
where $\mathrm{P}$ in~\eqref{eq:sbp} is supported on
\begin{equation}\label{eq:controlled_reflected_SDE}
dX_t = \bigl(b_\epsilon(X_t) + g(X_t)u(t,X_t)\bigr)dt 
+ \sqrt{\epsilon}\,g(X_t)dW_t+ r(X_t)d\eta_t,
\quad X_0 \sim \mu_0\quad\text{satisfies}\quad X_{t_f} \sim \mu_f,
\end{equation}
where $\mathcal{U} := \{u : \mathbb{E}[\int_0^{t_f}\|u(t,X_t)\|^2 dt] < \infty\}$.

Our setup~\eqref{eq: horizontal subbundle}-\eqref{eq: into_minimum_energy} unify two independent frameworks;  reflected Schr\"odinger bridges, where hard state constraints are
enforced on $\overline{\mathcal{X}}$ by a reflection mechanism
in~\cite{caluya2021reflected,deng2024reflected,kalise2025pde} and sub-Riemannian Schr\"odinger bridges, where the
diffusion is anisotropic, modeling underactuated systems \(m<d\)
in~\cite{adu2026schrodinger_manifold}.  Beyond this unification, leading to two fundamental constraints in underactuated systems: hard state confinement and non-holonomic kinematics, complications arise. Previous reflected Schr\"odinger bridge
formulations in~\cite{caluya2021reflected,deng2024reflected,kalise2025pde} typically assume isotropic diffusion and normal reflection mechanism $r=n$ in~\eqref{eq: normal unit vector}. These
assumptions are natural in elliptic settings, but
in general, $n\notin\mathrm{D}$ in~\eqref{eq: horizontal subbundle} (see
Figure~\ref{fig:normal_oblique_distribution_distance}). 
Second, even if the reflection mechanism $r$ in~\eqref{eq:reference_SDE} satisfies $r\in\mathrm{D}$, without any extra assumptions on $\mu_0$ and $\mu_f$, problem~\eqref{eq:sbp}-\eqref{eq:controlled_reflected_SDE} can be ill-possed (see Figure~\ref{fig:pure_diffusion}).

To overcome these difficulties, we construct a reflection mechanism $r$ that is intrinsic to $(\overline{\mathcal{X}},\mathrm D,\mathrm d)$ through an optimization criterion. This is the unique maximal inward oblique reflection  $r\in\mathrm D$ in~\eqref{eq: horizontal subbundle} that ensures that~\eqref{eq:reference_SDE} satisfies $dX_t\in\mathrm D(X_t)$, for all $t\in[0,t_f]$. We then impose
H\"ormander's bracket-generating condition~\cite{Hormander1967} together with a
non-characteristic boundary condition. Under these assumptions, following from~\cite{cattiaux1988time}, we show that the reflected anisotropic
diffusion~\eqref{eq:reference_SDE} admits a smooth, strictly positive transition density on $\overline{\mathcal{X}}\times \overline{\mathcal{X}}$. Consequently, following from~\cite{Fortet1940}, the SBP~\eqref{eq:sbp}-\eqref{eq:controlled_reflected_SDE} admits a unique solution. Although this transition density is central in the construction of the solution for~\eqref{eq:sbp}-\eqref{eq:controlled_reflected_SDE}, explicit closed-form formulas are
generally unavailable outside special low-dimensional or highly
symmetric cases~\cite{hsu1986excursions,lou2023reflected}. Therefore, a kernel-based Sinkhorn algorithm, central to the numerical scheme, is
not practical in the present setting. We instead formulate a PDE-based
Sinkhorn scheme. The novel structural feature of this scheme is that
the two Schr\"odinger factors satisfy different boundary conditions:
the backward factor satisfies the oblique Neumann condition associated
with the intrinsic reflection mechanism, while the forward factor satisfies
the adjoint no-flux condition enforcing conservation of probability
mass.

Finally, while obliquely reflected degenerate diffusions have been
studied in the stochastic analysis literature
~\cite{cattiaux1988time,chassagneux2020obliquely,ElKaroui1997,
Richou2020}, the reflection fields in those works are typically
prescribed a priori. In contrast, the reflection field introduced here
is geometrically determined by the sub-Riemannian diffusion itself. It is the unique horizontal direction that is maximally inward pointing. SB have been extended in other directions
~\cite{adu2026schrodinger,bondar2026nonlinear,caluya2021wasserstein}. Aside~\cite{caluya2021reflected,deng2024reflected} the work in~\cite{bondar2026nonlinear} again treats
non-degenerate noise without boundary, though with a mismatch between the noise and control
channels handled algorithmically. To the best of our knowledge, this work is the first to consider SB of sub-Riemannian diffusions with boundary. It is treated geometrically via the horizontal subbundle and reflection field.
\section{well-posedness and existence of solution}\label{sec:problem}
To ensure that the problem~\eqref{eq:sbp}-\eqref{eq:controlled_reflected_SDE} is well-posed, throughout we impose the following assumptions:
\begin{assumption}\label{ass:regularity}
For each $i=1,\ldots,m$, the vector field $g_i \in C^\infty(\overline{\mathcal{X}};\mathbb{R}^d)$.
\end{assumption}

\begin{assumption}\label{ass:hormander} 
For every $x \in \overline{\mathcal{X}}$, the Lie algebra generated by the vector fields spans the tangent space $\operatorname{Lie}(g_1,\ldots,g_m)(x) = \mathbb{R}^d$.
\end{assumption}

\begin{assumption}\label{ass:obliqueness}
For every $x \in \partial\mathcal{X}$, let $\mathrm T_x\partial\mathcal{X}$ be the tangent space at $x\in \partial\mathcal{X}$. We assume that, for all $x \in \partial\mathcal{X}$, $\mathrm D$ in~\eqref{eq: horizontal subbundle} satisfies $\mathrm D(x)\not\subset \mathrm T_x\partial\mathcal X$.
\end{assumption}

\begin{figure}[!t]
\centering
\makebox[\columnwidth][c]{%
    \subfloat[Pure horizontal diffusion%
    \label{fig:pure_horizontal_oblique}]{%
        \includegraphics[width=0.4\columnwidth]{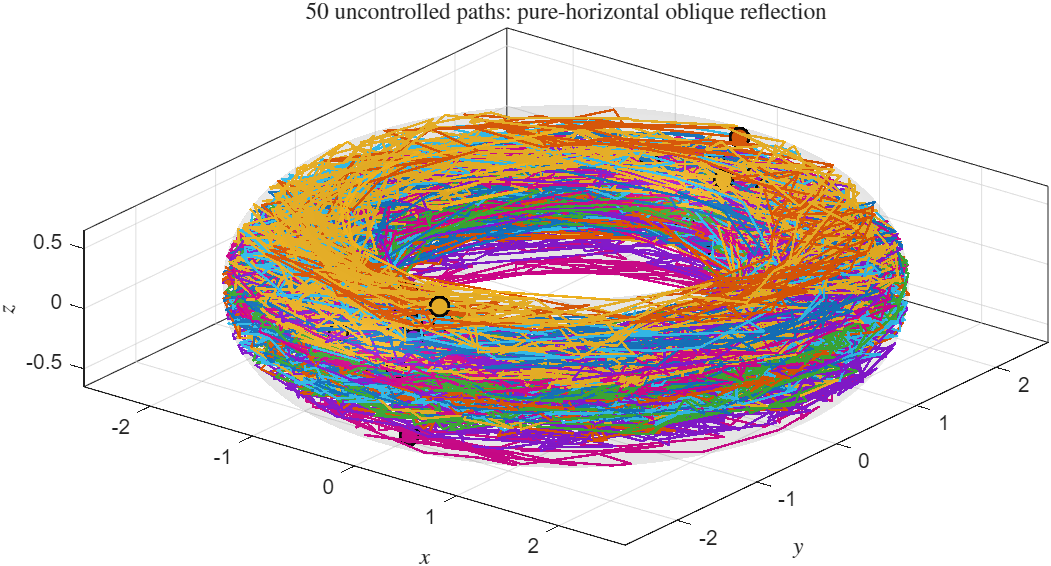}%
    }%
    \hfil
    \subfloat[Different angle view%
    \label{fig:pure_horizontal_oblique_2}]{%
        \includegraphics[width=0.35\columnwidth]{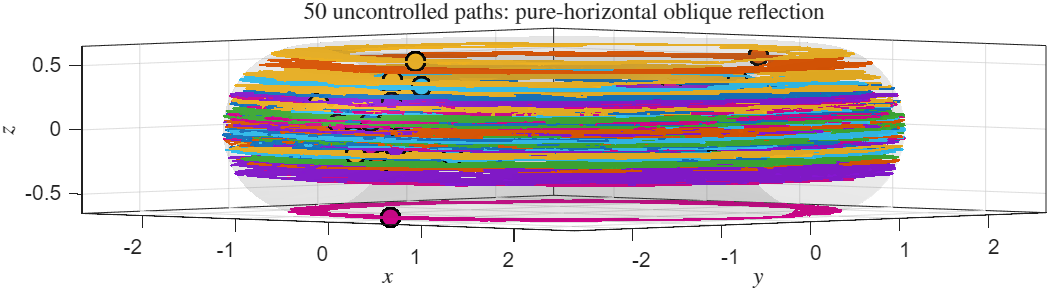}%
    }%
}
 \caption{
    The figure above shows uncontrolled oblique reflected diffusions on the solid torus
    $\overline{\mathcal X}\subset\R^3$ driven by the pure horizontal subbundle in different viewing angles. In both cases, for a fixed $z_0$ in the $z$-component of $\overline{\mathcal X}$, the corresponding conditional reference measure  associated to~\eqref{eq:reference_SDE} is concentrated on paths is confined to the horizontal slice.
    }
\label{fig:pure_diffusion}
\end{figure}
Geometrically, Assumption~\eqref{ass:obliqueness} is equivalent to $g(x)^\top n(x)\neq 0$ for all $x\in \partial\mathcal{X}$. Under Assumption~\eqref{ass:obliqueness} the optimization problem  $\sup_{\|a\| = 1} \langle g(x)a, n(x) \rangle$
admits the unique solution $a_r(x)=\frac{g(x)^\top n(x)}{\|g(x)^\top n(x)\|}$. Thus,
\begin{equation}\label{eq: reflection field}
r(x) :=  \frac{G(x) n(x)}{\|g(x)^\top n(x)\|}
\end{equation}
well-defined. Thus the above horizontal reflection is not generic but intrinsic to the 
sub-Riemannian structure. Indeed, among all horizontal fields of the form 
$g(x)a$ with $\|a\|=1$,  $r$ in~\eqref{eq: reflection field} is horizontal and maximally inward-pointing. For the special case where the diffusion matrix is $g(x) = I_d$, we get that~\eqref{eq: reflection field} reduces to~\eqref{eq: normal unit vector}. Thus, the reflection field $r$ in~\eqref{eq: reflection field} is oblique~\cite{cattiaux1988time,chassagneux2020obliquely,ElKaroui1997,Richou2020}.

\begin{figure}[!t]
\centering

\makebox[\columnwidth][c]{%
    \subfloat[SDE with oblique reflection field%
    \label{fig:constrained}]{%
        \includegraphics[width=0.4\columnwidth]
        {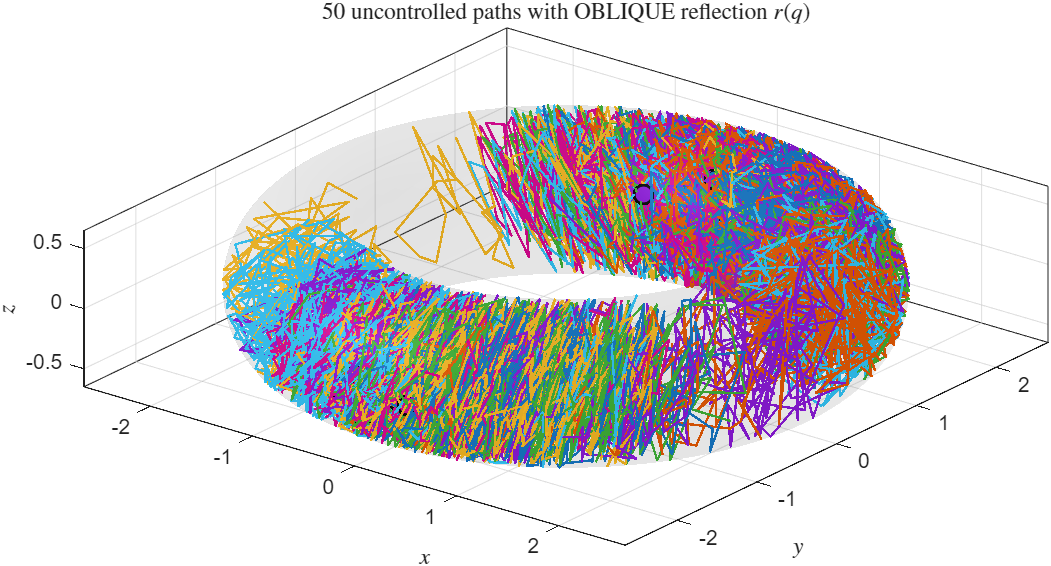}%
    }%
    \hfil
    \subfloat[SDE with normal reflection field%
    \label{fig:normal_reflection_SDE}]{%
        \includegraphics[width=0.35\columnwidth]
        {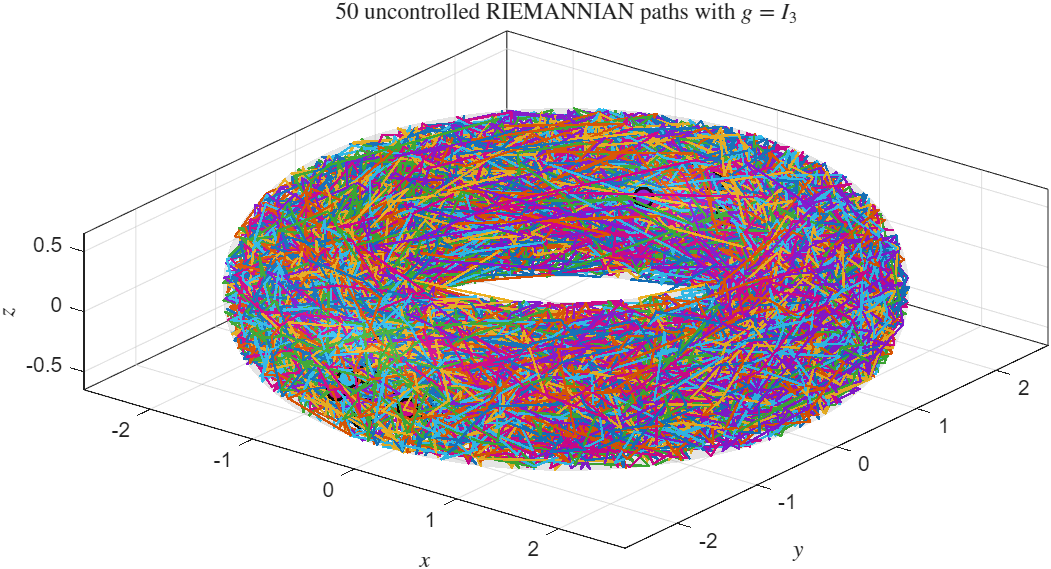}%
    }%
}



\caption{
Figure~(a) demonstrates the sub-Riemannian reflected SDE with the oblique reflection field in~\eqref{eq:reference_SDE}. For this $g(X_t)dW_t,r(X_t)\in D(X_t)$.
Figure (b) plots
$dq_t=\sqrt{\epsilon}\,dB_t+n(q_t)d\eta_t$. In this case, normal reflection is the
geometrically natural reflection field because the diffusion acts in all ambient
directions.
}
\label{fig:diffusion_comparison_three}
\end{figure}

The existence of solution for~\eqref{eq:sbp}-\eqref{eq:controlled_reflected_SDE} relies on first establishing the non-emptiness of the support of the reference law $\mathrm{R}^{\epsilon,\mu_0}$ for~\eqref{eq:reference_SDE}. For this we have the following result.
\begin{proposition}
\label{pro:wellposedness}
Under Assumptions~\ref{ass:regularity} and~\ref{ass:obliqueness}, the SDE~\eqref{eq:reference_SDE} admits a weak solution $\{X_t\}_{t\geq 0}$ and the law $\mathrm{R}^{\epsilon,\mu_0}$  is unique.
\end{proposition}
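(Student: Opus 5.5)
We would derive the statement from the general theory of obliquely reflected SDEs in smooth domains. The standard references are Lions--Sznitman for existence and pathwise uniqueness of reflected diffusions with oblique reflection, and Dupuis--Ishii or Stroock--Varadhan's submartingale problem for well-posedness in law. The plan has three steps: verify that the coefficients and the reflection field $r$ in \eqref{eq: reflection field} meet the hypotheses of those results; construct a solution; and deduce uniqueness of the law $\mathrm{R}^{\epsilon,\mu_0}$ by conditioning on the initial point.

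\textbf{Step 1 (regularity of the data).} By Assumption~\ref{ass:regularity}, $g$ and hence $G=gg^\top$ are $C^\infty$ on $\overline{\mathcal X}$. The drift $b_\epsilon=\frac{\epsilon}{2}\sum_i \nabla_{g_i}g_i$ is $C^\infty$ as well. Since $\overline{\mathcal X}$ is compact, we extend all coefficients to globally Lipschitz, bounded $C^\infty$ fields on $\R^d$; this changes nothing inside the domain. Because $\xi\in C^\infty$ and $\nabla\xi\neq0$ on $\partial\mathcal X$, the unit normal $n$ is smooth on $\partial\mathcal X$ and $\mathcal X$ is a $C^\infty$ domain. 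Assumption~\ref{ass:obliqueness} gives $g^\top n\neq 0$ on $\partial\mathcal X$. Since $\partial\mathcal X$ is compact, $\|g^\top n\|\ge c>0$ there, so $r=Gn/\|g^\top n\|$ is $C^\infty$ on a neighbourhood of $\partial\mathcal X$.

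\textbf{Step 2 (uniform obliqueness).} The key identity is
\[
\langle r(x),n(x)\rangle=\frac{n^\top G n}{\|g^\top n\|}=\|g(x)^\top n(x)\|\ge c>0,\qquad x\in\partial\mathcal X .
\]
After normalising, $\tilde r:=r/\|r\|$ is a smooth unit field on $\partial\mathcal X$ with $\langle \tilde r,n\rangle\ge c'>0$. This is exactly the uniform non-tangentiality condition required by Lions--Sznitman, Theorem~4.3 (smooth domain, $C^2$ oblique field, Lipschitz coefficients).

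\textbf{Step 3 (existence and uniqueness).} We apply Lions--Sznitman to the Skorokhod problem with reflection direction $\tilde r$ and local time $\tilde\eta_t=\int_0^t\|r(X_s)\|^{-1}d\eta_s$, which is equivalent to \eqref{eq:reference_SDE}. For each deterministic $x_0\in\overline{\mathcal X}$ this gives a strong solution, and pathwise uniqueness, on any fixed filtered probability space carrying $W$. Yamada--Watanabe then yields uniqueness in law, with weak existence following from strong existence. Our initial law is a general $\mu_0$ rather than a point mass. We handle this by taking $X_0\sim\mu_0$ independent of $W$, or equivalently by mixing the point laws $\mathrm R^{\epsilon,x_0}$ over $\mu_0$. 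This requires measurability of $x_0\mapsto \mathrm R^{\epsilon,x_0}$, which follows from continuous dependence on initial conditions in Lions--Sznitman. Uniqueness of $\mathrm{R}^{\epsilon,\mu_0}$ then follows by disintegrating any solution law with respect to $X_0$. The solution stays in $\overline{\mathcal X}$ by construction of the Skorokhod map, so $\mathrm R^{\epsilon,\mu_0}\in\mathcal P(\Omega)$.

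\textbf{Main obstacle.} The delicate point is that the diffusion matrix $G$ is degenerate, with rank $m<d$. Many well-posedness results for oblique reflection, notably the submartingale approach of Stroock--Varadhan, assume uniform ellipticity. I would therefore rely on the pathwise (Lions--Sznitman / Dupuis--Ishii) approach, which needs only Lipschitz coefficients and a smooth, uniformly oblique direction, with no ellipticity. The one thing to check carefully is that their domain hypotheses, smoothness and compactness of $\partial\mathcal X$, are met. If the version at hand requires the reflection field to be defined on all of $\R^d$, I would extend $\tilde r$ smoothly off $\partial\mathcal X$ using a cutoff in $\xi$; this is harmless because $\eta$ charges only $\partial\mathcal X$.
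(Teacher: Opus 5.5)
Your proof is correct, but it takes a different route from the paper.

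\textbf{The paper's route.} The paper flattens $\partial\mathcal X$ in local charts and invokes the half-space result of Ikeda--Watanabe (Chapter IV, Theorem 7.2). It checks Lipschitz coefficients and uniform obliqueness $r\cdot n=\|g^\top n\|\ge\kappa>0$. It also checks the non-characteristic boundary condition $n^\top a n=\epsilon\, n^\top G n=\epsilon\|g^\top n\|^2\ge\epsilon\kappa^2>0$ as a hypothesis of that theorem.

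\textbf{Your route and what it buys.} You work globally in the smooth bounded domain with the pathwise oblique-reflection theory of Lions--Sznitman and Dupuis--Ishii. That theory needs only Lipschitz coefficients and a smooth, uniformly oblique direction field. It requires no ellipticity and no non-characteristic condition. It gives strong existence and pathwise uniqueness, which is more than the proposition claims. You also avoid patching chart-wise solutions and chart-wise uniqueness into a global statement, a localization step the paper does not spell out.

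\textbf{What the paper's route buys.} The non-characteristic estimate it verifies is exactly what is reused in the next proposition to apply Cattiaux's density theorem. In this setting the two conditions coincide anyway, since both reduce to $\|g^\top n\|\ge\kappa$ on $\partial\mathcal X$, i.e.\ Assumption~3.

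\textbf{A small slip.} With $\tilde r=r/\|r\|$ you have $r\,d\eta=\tilde r\,d\tilde\eta$ where $\tilde\eta_t=\int_0^t\|r(X_s)\|\,d\eta_s$. The exponent is $+1$, not $-1$. This is harmless, because on $\partial\mathcal X$ one has $\|g^\top n\|\le\|r\|\le\sup\|g\|$. The two regulating processes therefore differ by a density bounded above and below, and the equivalence of the two Skorokhod formulations is unaffected.
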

\begin{proof}
Since \(\partial\mathcal X\) is \(C^\infty\), the boundary can be locally
flattened and hence we can identify
\(\overline{\mathcal X}\) locally with the upper half-space. Hence we analyze the reflected
SDE~\eqref{eq:reference_SDE} locally in half-space coordinates.
Thus, we verify the hypotheses~\cite[Chapter IV, Section 7, Theorem~7.2]{ikeda2014stochastic} after flattening
the boundary by local charts. To this end, we set $\sigma(x):=\sqrt{\epsilon}\,g(x)$ and $a(x):=\epsilon G(x)$. Then, by Assumption~\ref{ass:regularity} 
we have that \(\sigma\) and $b_\epsilon
=
\frac{\epsilon}{2}\sum_{i=1}^m \nabla_{g_i}g_i$
 Lipschitz on \(\overline{\mathcal X}\).
By Assumption~\ref{ass:obliqueness}, since $g^\top n\neq 0$ and smooth, we have that $\inf_{x\in\partial\mathcal X}
\|g(x)^\top n(x)\|
\ge \kappa>0$.
Therefore, $r\cdot n
=
\frac{n^\top Gn}
{\|g^\top n\|}
=
\|g^\top n\|
\ge \kappa>0$.
Moreover, for all $x\in\partial\mathcal{X}$,  we have that 
\begin{equation}\label{eq: non-characteristic with boundary}
n(x)^\top a(x)n(x)
=
\epsilon n(x)^\top G(x)n(x)
=
\epsilon \|g(x)^\top n(x)\|^2
\ge
\epsilon\kappa^2>0.    
\end{equation}
This completes the proof.
\end{proof}

Next we establish that the unique $\mathrm{R}^{\epsilon,\mu_0}$ for~\eqref{eq:reference_SDE} admits a positive smooth transition density on $\overline{\mathcal{X}}\times \overline{\mathcal{X}}$. This is established through the infinitesimal generator of~\eqref{eq:reference_SDE}.

\begin{lemma}
The generator for~\eqref{eq:reference_SDE} is
\begin{equation}\label{eq: infinitesimal_generator}
\mathcal L_\epsilon
=
b_\epsilon\cdot\nabla
+
\frac{\epsilon}{2}\operatorname{tr}(G\nabla^2)    
\end{equation}
on the domain
\begin{equation}\label{eq: domain_infinitesimal_generator}
\mathrm{Dom_r}
=
\left\{
\varphi\in C^2(\overline{\mathcal X})
:
r\cdot\nabla\varphi=0
\text{ on }\partial\mathcal X
\right\}.
\end{equation}
Moreover, on $\mathcal X$, it can be expressed as
\begin{equation}\label{eq:reflected_sum_of_squares}
\mathcal L_\epsilon
=
\frac12\sum_{i=1}^m V_i^2,\quad\text{where}\quad V_i:=\sqrt{\epsilon}\,g_i\cdot\nabla.
\end{equation}
\end{lemma}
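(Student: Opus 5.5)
The plan has three steps: apply It\^o's formula to $\varphi(X_t)$ for the weak solution of~\eqref{eq:reference_SDE} given by Proposition~\ref{pro:wellposedness}, read off the generator and the boundary condition, and then rewrite the second-order part in H\"ormander form.

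\textbf{It\^o expansion.} For $\varphi\in C^2(\overline{\mathcal X})$, It\^o's formula for continuous semimartingales gives
\[
\varphi(X_t)-\varphi(X_0)=\int_0^t\Bigl(b_\epsilon\cdot\nabla\varphi+\tfrac{\epsilon}{2}\operatorname{tr}(G\nabla^2\varphi)\Bigr)(X_s)\,ds+\sqrt{\epsilon}\int_0^t\nabla\varphi(X_s)^\top g(X_s)\,dW_s+\int_0^t (r\cdot\nabla\varphi)(X_s)\,d\eta_s .
\]
The quadratic variation term is $\frac{\epsilon}{2}\operatorname{tr}(gg^\top\nabla^2\varphi)$, and $gg^\top=G$. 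The stochastic integral is a true martingale, because $g$ and $\nabla\varphi$ are bounded on the compact set $\overline{\mathcal X}$ by Assumption~\ref{ass:regularity}.

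\textbf{Reading off the generator and domain.} The term $\int_0^t(r\cdot\nabla\varphi)\,d\eta_s$ vanishes identically when $r\cdot\nabla\varphi=0$ on $\partial\mathcal X$, since $\eta$ charges only times with $X_s\in\partial\mathcal X$. For $\varphi\in\mathrm{Dom_r}$, $\varphi(X_t)-\varphi(X_0)-\int_0^t\mathcal L_\epsilon\varphi(X_s)\,ds$ is therefore a martingale. Taking expectations and letting $t\downarrow0$ identifies $\mathcal L_\epsilon$ as the generator on $\mathrm{Dom_r}$, via the submartingale-problem formulation of Stroock--Varadhan type, which is also the framework behind the uniqueness in Proposition~\ref{pro:wellposedness}.

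\textbf{Main obstacle: sharpness of the domain.} The delicate step is showing that the Neumann-type condition is the right one, not just sufficient. Suppose $r\cdot\nabla\varphi\neq0$ at some boundary point. Then $\mathbb E\int_0^t (r\cdot\nabla\varphi)(X_s)\,d\eta_s$ is of order $\sqrt t$ for a process started there, so the limit $t^{-1}(\mathbb E_x\varphi(X_t)-\varphi(x))$ does not exist. The reason is that $\eta$ grows like local time, and this growth is nondegenerate by the non-characteristic bound~\eqref{eq: non-characteristic with boundary}. I would justify this by flattening the boundary locally as in the proof of Proposition~\ref{pro:wellposedness}. In those coordinates the normal component behaves like a one-dimensional reflected diffusion with diffusion coefficient at least $\epsilon\kappa^2$, whose local time has $\mathbb E\,\eta_t\asymp\sqrt t$. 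If this sharpness step proves too technical, I would state the domain characterization in the martingale-problem sense only.

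\textbf{Sum-of-squares form.} This is a direct computation on $\mathcal X$. For $V_i=\sqrt\epsilon\,g_i\cdot\nabla$,
\[
V_i^2\varphi=\epsilon\, g_i\cdot\nabla(g_i\cdot\nabla\varphi)=\epsilon\, g_i^\top\nabla^2\varphi\, g_i+\epsilon(\nabla_{g_i}g_i)\cdot\nabla\varphi .
\]
Summing over $i$ and using $\sum_i g_i^\top\nabla^2\varphi\,g_i=\operatorname{tr}(G\nabla^2\varphi)$ gives
\[
\tfrac12\sum_i V_i^2\varphi=\tfrac{\epsilon}{2}\operatorname{tr}(G\nabla^2\varphi)+\tfrac\epsilon2\sum_i\nabla_{g_i}g_i\cdot\nabla\varphi .
\]
The first-order term is exactly $b_\epsilon\cdot\nabla\varphi$, which recovers~\eqref{eq:reflected_sum_of_squares} and confirms that $b_\epsilon$ is the Stratonovich correction.
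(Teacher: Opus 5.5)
Your proof is correct and follows the paper's route: It\^o's formula for $\varphi(X_t)$, the vanishing of the $d\eta_s$ term for $\varphi\in\mathrm{Dom_r}$ because $\eta$ increases only on $\partial\mathcal X$, and the resulting martingale property. Your direct computation $\frac12\sum_i V_i^2=\frac{\epsilon}{2}\operatorname{tr}(G\nabla^2)+b_\epsilon\cdot\nabla$ supplies what the paper only cites from earlier work, and your sharpness step (via $\mathbb E_x\eta_t\asymp\sqrt t$ from the non-characteristic bound) goes beyond the paper, which shows only that the generator acts as $\mathcal L_\epsilon$ on $\mathrm{Dom_r}$.
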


\begin{proof}
Applying Itô's formula to $\varphi(X_t)$, where $\varphi\in \mathrm{Dom_r}$ and $X_t$ solves~\eqref{eq:reference_SDE}, gives
$d\varphi(X_s)
=
\mathcal L_\epsilon\varphi(X_s)\,ds +
\sqrt{\epsilon}\,\nabla\varphi(X_s)^\top g(X_s)\,dW_s+
\nabla\varphi(X_s)\cdot r(X_s)\,d\eta_s$
where $\mathcal L_\epsilon$ is in~\eqref{eq: infinitesimal_generator}. Since the measure $d\eta_t$ is supported on 
$\partial\mathcal X$ and  $\varphi\in \mathrm{Dom_r}$, by integrating both sides over $[0,t]$, we get that
$\varphi(X_t)-\varphi(X_0)
-
\int_0^t
\mathcal L_\epsilon\varphi(X_s)\,ds=\sqrt{\epsilon}\int_0^t\nabla\varphi(X_s)^\top g(X_s)\,dW_s$.
Hence since $\varphi\in \mathrm{Dom_r}$, under Assumption~\ref{ass:regularity} we get that, the left hand side $\varphi(X_t)-\varphi(X_0)
-
\int_0^t
\mathcal L_\epsilon\varphi(X_s)\,ds$ is a martingale. Hence the reflected generator acts as~\eqref{eq: infinitesimal_generator}
on $\mathrm{Dom_r}$ in~\eqref{eq: domain_infinitesimal_generator}. The sum-of-squares representation follows from our previous work in~\cite{adu2026schrodinger_manifold}.
\end{proof}
We proceed to prove that $\mathrm{R}^{\epsilon,\mu_0}$ for~\eqref{eq:reference_SDE} admits a smooth positive transition density on $\overline{\mathcal{X}}\times \overline{\mathcal{X}}$.

\begin{proposition}\label{lem:kernel}
The unique reference measure $\mathrm{R}^{\epsilon,\mu_0}\in\mathcal P(\Omega)$  associated to~\eqref{eq:reference_SDE} 
admits a smooth positive transition density function $p^r$ on $\overline{\mathcal{X}}\times \overline{\mathcal{X}}$. 
\end{proposition}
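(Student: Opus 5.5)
The plan is to establish smoothness via hypoellipticity up to the boundary, following \cite{cattiaux1988time}, and strict positivity via a support theorem combined with H\"ormander controllability. The density $p^r_t(x,y)$ is the fundamental solution of the backward problem
\[
\partial_t u=\mathcal L_\epsilon u \text{ in } \mathcal X,\qquad r\cdot\nabla u=0 \text{ on }\partial\mathcal X,
\]
and, in the forward variable $y$, it solves the adjoint Fokker--Planck equation with the corresponding no-flux condition. By the preceding lemma, $\mathcal L_\epsilon=\frac12\sum_i V_i^2$ on $\mathcal X$ is a H\"ormander sum of squares under Assumption~\ref{ass:hormander}. Interior smoothness of $(t,x,y)\mapsto p^r_t(x,y)$ in $\mathcal X\times\mathcal X$ then follows from H\"ormander's hypoellipticity theorem \cite{Hormander1967}, applied to $\partial_t-\mathcal L_\epsilon$ and to its adjoint. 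Both operators have the bracket-generating property, since adding $\partial_t$ to $\{V_i\}$ spans $\R^{d+1}$.

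\textbf{Step 1 (existence of a density up to the boundary).} I will verify the hypotheses of Cattiaux's theorem for reflected degenerate diffusions \cite{cattiaux1988time}. These hypotheses are:
\begin{itemize}
\item smooth coefficients, which holds by Assumption~\ref{ass:regularity};
\item a smooth reflection field $r$ with $r\cdot n\ge\kappa>0$, as shown in the proof of Proposition~\ref{pro:wellposedness};
\item the non-characteristic boundary condition $n^\top G n\ge\kappa^2>0$ from \eqref{eq: non-characteristic with boundary};
\item the H\"ormander condition at boundary points, where the relevant Lie algebra is generated by $V_1,\dots,V_m$ together with $r$, which here is horizontal.
\end{itemize}
Because $r=Gn/\|g^\top n\|\in\mathrm D$, the reflected dynamics never leave the distribution generated by the $g_i$. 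Hence the bracket condition needed at the boundary is just Assumption~\ref{ass:hormander} restricted to $\partial\mathcal X$, and no additional brackets involving a non-horizontal direction appear. Cattiaux's Malliavin-calculus argument then yields a $C^\infty$ density for $X_t$, $t>0$, on $\overline{\mathcal X}$, jointly smooth in $(t,x,y)$, with boundary regularity provided by the non-characteristic assumption.

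\textbf{Step 2 (positivity).} I will use the Stroock--Varadhan support theorem for reflected diffusions. The support of the law of $X$ started at $x$ is the closure of the set of solutions of the controlled skeleton
\[
\dot x=b_\epsilon(x)+\sqrt\epsilon\, g(x)h(t)+r(x)\dot\eta .
\]
By Chow--Rashevskii, under Assumption~\ref{ass:hormander}, any two points of the connected set $\overline{\mathcal X}$ can be joined by a horizontal curve staying inside $\mathcal X$, possibly after a short initial push off the boundary. Such a push is available because $\mathrm D(x)\not\subset \mathrm T_x\partial\mathcal X$ by Assumption~\ref{ass:obliqueness}. This shows that $p^r_t(x,\cdot)$ has full support.

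To upgrade full support to strict pointwise positivity, I will argue with the strong maximum principle. Bony's maximum principle for H\"ormander operators applies in the interior. At the boundary, I will use a Hopf-type lemma for the oblique condition; this is valid because the boundary is non-characteristic and $r\cdot n>0$. Together these rule out interior or boundary zeros of the nonnegative solution $u(t,x)=p^r_t(x,y)$.

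\textbf{Main obstacle.} I expect the hardest part to be boundary regularity and the boundary Hopf lemma for a degenerate operator. Here I would rely on the non-characteristic estimate \eqref{eq: non-characteristic with boundary}. Near the boundary it lets me flatten coordinates so that $n^\top G n>0$ makes the operator elliptic in the normal direction, while hypoellipticity supplies regularity in the tangential directions.
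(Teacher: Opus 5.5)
Your Step 1 matches the paper's argument: boundary points are very good points by H\"ormander's condition, the boundary is non-characteristic, and there is no boundary diffusion, so Cattiaux's theorem applies. One caveat: that theorem gives smoothness of $y\mapsto p^r_t(x,y)$ up to $\partial\mathcal X$, and the paper gets smoothness in $x$ up to the boundary by time-reversing the reflected diffusion. You assert joint smoothness in $(t,x,y)$ directly from the Malliavin argument, which skips that step, and interior H\"ormander hypoellipticity does not reach $\partial\mathcal X$.

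The genuine gap is in Step 2.
\begin{itemize}
\item For fixed $y$, the strong maximum principle and the oblique Hopf lemma applied to $u(t,x)=p^r_t(x,y)$ only show that a zero at $(t_0,x_0)$ forces $p^r_s(\cdot,y)\equiv 0$ for $s<t_0$.
\item Full support of $p^r_s(x,\cdot)$ plus Chapman--Kolmogorov gives exactly the same conclusion, and neither ingredient can exclude it. Full support of every $p^r_s(x,\cdot)$ is compatible with vanishing at a single point; think of a kernel $q_s(x,y)w(y)$ with $w\ge 0$ vanishing only at $y_0$.
\item What you actually need is $p^r_s(\cdot,y)\not\equiv 0$ for small $s$. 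Given full support and Chapman--Kolmogorov, this is equivalent to the positivity you are trying to prove.
\item For $y\in\partial\mathcal X$ this is precisely the delicate case: for an absorbed diffusion the density does vanish there. The outcome is decided by the boundary condition in the $y$-variable, which your argument never uses.
\end{itemize}

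The repair is to run your maximum-principle argument in the forward variable.
\begin{itemize}
\item Fix $x$ and set $v(t,y)=p^r_t(x,y)$. It solves $\partial_t v=\mathcal L^*_{\epsilon,\mathrm m}v$ with the no-flux condition defining $\mathrm{Dom}^*_{n,\mathrm m}$.
\item Since $\int v(t,\cdot)\,d\mathrm m=1$, $v(t,\cdot)\not\equiv 0$, so Bony's principle excludes interior zeros. This uses that $\mathcal X$ is connected, which you rightly assume and the paper does not state.
\item At a boundary zero $y_0$ of $v(t_0,\cdot)$, the terms proportional to $v$ drop out of the no-flux condition, leaving $(Gn)\cdot\nabla v=0$.
\item At a boundary minimum, $\nabla v=(n\cdot\nabla v)\,n$ with $n\cdot\nabla v\ge 0$. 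Hence $\|g^\top n\|^2\,n\cdot\nabla v=0$, which contradicts the Hopf lemma; the Hopf lemma is available because $n^\top G n>0$.
\end{itemize}
This yields positivity directly, with no support theorem needed; the paper instead cites Cattiaux's support-type positivity result.

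If you keep the support theorem, note that its skeleton uses the Stratonovich drift. Here that drift is $0$, because $b_\epsilon$ is exactly the It\^o--Stratonovich correction. With $b_\epsilon$ in the skeleton, as you wrote it, horizontal Chow--Rashevskii curves are not admissible trajectories, since $b_\epsilon$ need not lie in $\mathrm D$.
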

\begin{proof}
We first show the existence of a smooth transition density by verifying the hypotheses of~\cite[Theorem~1.7]{cattiaux1988time}. To this end, using the Lie-algebra isomorphism $\Phi:v\mapsto v\cdot\nabla$, from~\cite[Proposition~1.8]{cattiaux1988time}, under Assumption~\ref{ass:hormander}, since $\mathrm{Lie}(V_1,\dots,V_m)(x)=\mathbb{R}^d$, for all $x \in \partial\mathcal{X}$
we have by~\cite[Proposition~1.8]{cattiaux1988time} that every $x \in \partial\mathcal{X}$ is a very good point (see~\cite[Definition~1.5]{cattiaux1988time}). Compactness of $\partial\mathcal X$
allows us to extract finitely many such charts and choose the bracket bounds uniformly. From~\eqref{eq: non-characteristic with boundary}, we have that  the interior diffusion in~\eqref{eq:reference_SDE}  has a non-zero normal component at the boundary.  Since~\eqref{eq:reference_SDE} has no boundary diffusion, the hypotheses of~\cite[Theorem~1.7]{cattiaux1988time} are satisfied. Therefore, the reference measure is characterized by a transition density $\mathrm{R}^{\epsilon,\mu_0}(0,dx;t,dy)=\,d\mu_0(x)p^r_{t,\epsilon}(x,y)\,d\mathrm{m}(y)$,
where $p^r_{t,\epsilon}(x,\cdot)\in C^{\infty}(\overline{\mathcal{X}})$ for each $x\in\overline{\mathcal{X}}$ and $t>0$. By time-reversing~\eqref{eq:reference_SDE}, we get smoothness in both components. Positivity follows from the support argument in~\cite[Theorem 5.36]{cattiaux1992stochastic}.
\end{proof} 
We state the following result and refer the reader to~\cite{adu2026schrodinger_manifold,Leonard2014Survey}.
\begin{theorem}\label{thm:properness}
Let $d\mu_0=\rho_0\,d\mathrm{m}$ and $d\mu_f=\rho_f\,d\mathrm{m}$ and let $p^r$ is the transition density function in Proposition~\ref{lem:kernel}. The optimal control for~\eqref{eq: into_minimum_energy} is: 
\begin{equation}\label{eq:optimal_feedback}
u_r(t,x)=\epsilon\,g(x)^\top\nabla\log\varphi_r(t,x),
\end{equation}
where  $\varphi_r(t,x):=\int_{\overline{\mathcal{X}}} \phi_f(y)p^r_{t_f-t,\epsilon}(x,y)\,d\mathrm{m}(y)$ and
$(\phi_0,\phi_f)$ are positive functions on $\overline{\mathcal{X}}$ that satisfy 
\begin{align*}
 &\phi_0(x)\int_{\overline{\mathcal{X}}} \phi_f(y)p^r_{t_f,\epsilon}(x,y)\,d\mathrm{m}(y)=1,\quad\text{ for $\mu_0$-a.e}\\ 
& \phi_f(y)\int_{\overline{\mathcal{X}}} \phi_0(x)p^r_{t_f\epsilon}(x,y)\rho_0(x)\,d\mathrm{m}(x)=\rho_f(y),\quad \text{for $\mathrm{m}$-a.e}
\end{align*}
 
\end{theorem}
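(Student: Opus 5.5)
The plan is to follow the classical route of Fortet, Beurling and Jamison, as surveyed by L\'eonard, with Proposition~\ref{lem:kernel} supplying the key input.

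\textbf{Step 1: reduce to a static problem.} Disintegrate any $\mathrm P\in\mathcal C_{\mathrm{SB}}$ with respect to its endpoint law $\pi:=(\mathrm{ev}_0,\mathrm{ev}_{t_f})_\#\mathrm P$. The additivity of relative entropy gives
\[
\mathbb E_{\mathrm P}\Bigl[\log\tfrac{d\mathrm P}{d\mathrm R^{\epsilon,\mu_0}}\Bigr]=H\bigl(\pi\,\|\,R_{0t_f}\bigr)+\int H\bigl(\mathrm P^{xy}\,\|\,\mathrm R^{xy}\bigr)\,d\pi(x,y).
\]
Here $R_{0t_f}(dx\,dy)=\rho_0(x)p^r_{t_f,\epsilon}(x,y)\,d\mathrm m(x)\,d\mathrm m(y)$ by Proposition~\ref{lem:kernel}, and $\mathrm R^{xy}$ denotes the reflected bridges. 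The second term vanishes exactly when $\mathrm P=\int\mathrm R^{xy}\,d\pi$. So~\eqref{eq:sbp} reduces to minimizing $H(\pi\|R_{0t_f})$ over couplings $\pi$ of $(\mu_0,\mu_f)$.

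\textbf{Step 2: solve the static problem with Schr\"odinger factors.} Because $\overline{\mathcal X}$ is compact and $p^r_{t_f,\epsilon}$ is smooth and strictly positive on $\overline{\mathcal X}\times\overline{\mathcal X}$, it satisfies $0<c\le p^r_{t_f,\epsilon}\le C$.

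Under this bound, and since $\mu_0,\mu_f$ have positive densities, there is a finite-entropy coupling, for example $\mu_0\otimes\mu_f$. This requires $H(\mu_i\|\mathrm m)<\infty$, which I will assume, for instance via bounded densities.

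Strict convexity and lower semicontinuity then give a unique minimizer $\pi^\ast$. Fortet's theorem, or Csisz\'ar's projection argument, shows that $\pi^\ast$ has the product form
\[
\pi^\ast(dx\,dy)=\phi_0(x)\,\phi_f(y)\,R_{0t_f}(dx\,dy)
\]
with positive $\phi_0,\phi_f$. Imposing the two marginal constraints on this product form yields exactly the Schr\"odinger system in the statement. The bounds $c\le p^r\le C$ are what make Fortet's fixed-point (Hilbert metric contraction) argument work, and they also give the positivity and boundedness of $\phi_0,\phi_f$.

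\textbf{Step 3: identify the optimal control.} Set $\varphi_r(t,x)=\int\phi_f(y)p^r_{t_f-t,\epsilon}(x,y)\,d\mathrm m(y)$. The optimal path measure is then the $h$-transform
\[
d\mathrm P^\ast=\phi_0(X_0)\,\phi_f(X_{t_f})\,d\mathrm R^{\epsilon,\mu_0}.
\]
By the Markov property, its density process is $\phi_0(X_0)\,\varphi_r(t,X_t)$. Next I will check that $\varphi_r$ solves the backward Kolmogorov equation $\partial_t\varphi_r+\mathcal L_\epsilon\varphi_r=0$ with the oblique condition $r\cdot\nabla\varphi_r=0$ on $\partial\mathcal X$. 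This follows by differentiating the semigroup: since $\varphi_r(t,\cdot)$ lies in $\mathrm{Dom_r}$ of~\eqref{eq: infinitesimal_generator}--\eqref{eq: domain_infinitesimal_generator}, smoothness comes from Proposition~\ref{lem:kernel}.

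Apply It\^o's formula to $\log\varphi_r(t,X_t)$ under $\mathrm R$. The $d\eta$ term vanishes because of the Neumann condition, which leaves
\[
\varphi_r(t,X_t)/\varphi_r(0,X_0)=\mathcal E\Bigl(\int\sqrt\epsilon\,\nabla\log\varphi_r^\top g\,dW\Bigr)_t .
\]
By Girsanov, $\mathrm P^\ast$ is then the law of~\eqref{eq:controlled_reflected_SDE} with $u=\epsilon g^\top\nabla\log\varphi_r$. For any admissible $u$, the entropy equals $\mathbb E\int\frac1{2\epsilon}\|u\|^2dt$, so optimality in~\eqref{eq: into_minimum_energy} transfers from Step~2.

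\textbf{Main obstacle.} The delicate point is Step~3 near the boundary. I must justify that $\varphi_r$ is $C^{1,2}$ up to $\partial\mathcal X$ and satisfies the oblique Neumann condition, and that the stochastic exponential is a true martingale.

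For the first part, I would use the smoothness in both variables from Proposition~\ref{lem:kernel}, together with the strict positivity, which gives $\varphi_r\ge c\int\phi_f\,d\mathrm m>0$ and hence $\nabla\log\varphi_r$ bounded on $[0,t_f-\delta]$. Near $t=t_f$, I would approximate $\phi_f$ by smooth functions or argue on $[0,t_f-\delta]$ and let $\delta\to0$.

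With $\nabla\log\varphi_r$ bounded, Novikov's condition yields the martingale property.
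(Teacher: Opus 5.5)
Your proposal is correct and follows essentially the route the paper indicates: the Fortet--L\'eonard reduction to the static problem using the two-sided bounds on $p^r_{t_f,\epsilon}$, then the $h$-transform. There the $d\eta$ term drops out of It\^o's formula because $r\cdot\nabla\varphi_r=0$, so the oblique reflection survives Girsanov, and finite energy comes from positivity and smoothness of $p^r$. Your added finite-entropy hypothesis is genuinely needed for $u_r\in\mathcal U$ (the theorem leaves it implicit), though only $H(\mu_f\|\mathrm m)<\infty$ is required, since $d(\mu_0\otimes\mu_f)/dR_{0t_f}=\rho_f(y)/p^r_{t_f,\epsilon}(x,y)$; you also do not need Novikov, because $\phi_0(X_0)\varphi_r(t,X_t)$ is already a martingale as the conditional expectation of $d\mathrm P^\ast/d\mathrm R^{\epsilon,\mu_0}$.
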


Although the skeleton of the proof is similar to~\cite{adu2026schrodinger_manifold,Leonard2014Survey}, our reflection mechanism introduces two novel features in the proof. First, the oblique reflection field $r$ in~\eqref{eq: reflection field} is preserved under the $h$-transform, ensuring the controlled process remains confined in $\overline{\mathcal{X}}$ while satisfying the geometric constraint $dX_t \in \mathrm D(X_t)$. Second, the degeneracy of $G = gg^\top$ in~\eqref{eq: reflection field} forces $g(X_t)u_r(t,X_t)\in\mathrm D(X_t)$, for all $t\in[0,t_f]$. The finite-energy bound relies on strict positivity and smoothness of the transition density $p^r$ on the compact sub-Riemannian manifold $\overline{\mathcal{X}}\times\overline{\mathcal{X}}$. 

\section{Convex Eulerian Formulation}\label{sec:Eulerian}
We state here that since explicit closed-form formulas for $p^r$ in Proposition~\ref{lem:kernel} is
generally unavailable~\cite{hsu1986excursions,lou2023reflected}, the goal of this section is to facilitate a numerical scheme through PDE.  To this end, following from~\cite[pp.~567-568]{agrachev2019comprehensive}, throughout we write $d\mathrm m(x)=\omega(x)dx$, where $\omega>0$ and $\omega\in C^{\infty}(\overline{\mathcal{X}})$. Let $m:=\rho\,u$, where $\rho$ is a density function of~\eqref{eq:controlled_reflected_SDE}. Then inspired by~\cite{BenamouBrenier2000}, the Eulerian problem associated to~\eqref{eq: into_minimum_energy}-\eqref{eq:controlled_reflected_SDE} is 
\begin{equation*}
\inf_{(\rho,m)} \mathcal{J}(\rho, m):=\int_0^{t_f}\int_{\overline{\mathcal{X}}} \frac{\norm{m(t,x)}^2}{2\rho(t,x)}\,d\mathrm{m}d t
\end{equation*}
subject to
\begin{equation}\label{eq:VA_FPK}
\p_t \rho
+ \frac{1}{\omega}\diver\!\bigl(b_\epsilon\rho\omega + g m\omega\bigr)
- \frac{\epsilon}{2\omega}\sum_{i,j=1}^d \p_{ij}(G_{ij}\rho\omega)=0
\quad\text{in }(0,t_f)\times \mathcal{X},
\end{equation}
\begin{equation}\label{eq:VA_Eulerian_endpoints}
\rho(0,\cdot)=\rho_0,\qquad \rho(t_f,\cdot)=\rho_f,
\end{equation}
with no flux condition
\begin{equation}\label{eq:VA_no_flux}
\Bigl(
b_\epsilon\rho\omega + g m\omega - \frac{\eps}{2}\,\diver(G\rho\omega)
\Bigr)\cdot n =0
\quad\text{on }(0,t_f)\times \p\mathcal{X}.
\end{equation}
Here  $(\diver(G\rho\omega))_i:=\sum_{j=1}^d \p_j(G_{ij}\rho\omega)$ and the solution $(\rho,m)$ is to be understood in the weak sense with the test functions $\zeta\in C^{1,2}([0,t_f]\times \ol{\mathcal X})$ satisfying $r\cdot\nabla \zeta=0$ on $(0,t_f)\times \p\mathcal X$.

We state the following result and refer to~\cite{adu2026schrodinger_manifold}.
\begin{theorem}\label{prop: Strong duality and attainment}
Assume that there exist strictly positive functions $(\varphi_\epsilon,\hat\varphi_\epsilon)$ that solve: 
\begin{subequations}\label{eq:Schr-system-thm}
\begin{align}
&\partial_t\varphi_\epsilon + \mathcal L_\epsilon \varphi_\epsilon =0,\quad \partial_t\hat\varphi_\epsilon - \mathcal L_{\epsilon,\mathrm m}^* \hat\varphi_\epsilon =0,\label{eq:Schr-bwd}\\
&\varphi_\epsilon(0,\cdot)\hat\varphi_\epsilon(0,\cdot)=\rho_0,\qquad
\varphi_\epsilon(t_f,\cdot)\hat\varphi_\epsilon(t_f,\cdot)=\rho_f,\label{eq:Schr-bc}
\end{align}
\end{subequations}
where $\mathcal L_\epsilon$ is in~\eqref{eq: infinitesimal_generator} with domain $\mathrm{Dom_r}$ in~\eqref{eq: domain_infinitesimal_generator} and $\mathcal L_{\epsilon,\mathrm m}^*\hat\varphi=-\frac{1}{\omega}(\nabla\cdot (b_\epsilon\hat\varphi\omega)
+\frac{\eps}{2}\sum_{i,j=1}^d \p_{ij} (G_{ij}\hat\varphi\omega)$ with domain $\mathrm{Dom}^*_{n,\mathrm m}:= \Bigl\{\hat\varphi \in C^2(\overline{\mathcal{X}}) : \Bigl(b_\epsilon \hat\varphi\omega - \frac{\epsilon}{2}\mathrm{div}(G\hat\varphi\omega)\Bigr) \cdot n = 0 \text{ on } \partial\mathcal{X}\Bigr\}$.
Then strong duality holds: 
\begin{equation*}
\inf_{\substack{(\rho_{\epsilon}, m_{\epsilon}) \text{\eqref{eq:VA_FPK}-\eqref{eq:VA_no_flux}}}} 
\mathcal{J}(\rho_{\epsilon}, m_{\epsilon}) = \sup_{\lambda_{\epsilon}\in\Lambda}\mathcal F(\lambda_{\epsilon}):=\int_{\overline{\mathcal{X}}}\lambda(t_f,x)\rho_f(x) d\mathrm{m}
-\int_{\overline{\mathcal{X}}}\lambda(0,x)\rho_0(x) d\mathrm{m},    
\end{equation*}
where 
\begin{multline*}
\Lambda := \Bigg\{ 
\lambda\in C^{1,2}([0,t_f]\times \ol{\mathcal X}); r\cdot\nabla\lambda=0\text{ on }\p\mathcal{X},\\~\partial_t\lambda + b_\epsilon\cdot\nabla\lambda + \frac12\langle \nabla\lambda, G\nabla\lambda\rangle + \frac{\epsilon}{2}\operatorname{tr}(G\nabla^2\lambda) \le 0 \quad \text{on } (0,t_f)\times\mathcal X \Bigg\}.    
\end{multline*}
The supremum is attained at $\lambda_\epsilon^\star:=\epsilon\log\varphi_\epsilon$
Moreover, the primal infimum is uniquely attained at $\rho_\epsilon^\star:=\varphi_\epsilon\hat\varphi_\epsilon\in C^\infty((0,t_f)\times\ol{\mathcal X})$ and $m_\epsilon^\star:=\rho_\epsilon^\star g^\top\nabla\lambda_\epsilon^\star = \epsilon \hat\varphi_\epsilon g^\top\nabla\varphi_\epsilon\in C^\infty((0,t_f)\times\ol{\mathcal X})$.    
\end{theorem}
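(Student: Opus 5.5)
The proof is a verification argument built on weak duality: every admissible $\lambda\in\Lambda$ gives a lower bound for $\mathcal J$ on the feasible set of~\eqref{eq:VA_FPK}--\eqref{eq:VA_no_flux}. We then show that the pair built from $(\varphi_\epsilon,\hat\varphi_\epsilon)$ attains this bound. Uniqueness will follow from strict convexity of $\mathcal J$.

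\textbf{Weak duality.} Take a feasible $(\rho,m)$ and $\lambda\in\Lambda$. Test the weak form of~\eqref{eq:VA_FPK} against $\zeta=\lambda$; this is allowed because $r\cdot\nabla\lambda=0$ on $\partial\mathcal X$. Together with~\eqref{eq:VA_Eulerian_endpoints} this gives
\[
\mathcal F(\lambda)=\int_0^{t_f}\!\!\int_{\overline{\mathcal X}}\Big(\rho\big(\partial_t\lambda+b_\epsilon\cdot\nabla\lambda+\tfrac{\epsilon}{2}\operatorname{tr}(G\nabla^2\lambda)\big)+\langle m,g^\top\nabla\lambda\rangle\Big)\,d\mathrm m\,dt .
\]
The boundary terms from integrating by parts twice are of two kinds. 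The flux terms are killed by~\eqref{eq:VA_no_flux}. The terms containing $\rho\,\nabla\lambda$ cancel because, by the lemma above, $\mathcal L_\epsilon$ on $\mathrm{Dom_r}$ is the generator, and its adjoint pairing is exactly the one encoded by $\mathrm{Dom}^*_{n,\mathrm m}$.

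To control the $m$-term, use Young's inequality, $\langle m,g^\top\nabla\lambda\rangle\le \frac{\|m\|^2}{2\rho}+\frac{\rho}{2}\langle\nabla\lambda,G\nabla\lambda\rangle$. The inequality defining $\Lambda$ then gives $\mathcal F(\lambda)\le\mathcal J(\rho,m)$.

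\textbf{Attainment.} Put $\lambda^\star=\epsilon\log\varphi_\epsilon$. Substituting into~\eqref{eq:Schr-bwd} (the Hopf--Cole transform) shows that the inequality defining $\Lambda$ holds with equality, up to the $\epsilon$-scaling of the quadratic term, which we match to the normalization of $\mathcal J$ (the $1/(2\epsilon)$ in~\eqref{eq: into_minimum_energy}). The oblique condition $r\cdot\nabla\varphi_\epsilon=0$ transfers to $\lambda^\star$ because $\varphi_\epsilon>0$, so $\lambda^\star\in\Lambda$.

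Next set $\rho^\star=\varphi_\epsilon\hat\varphi_\epsilon$ and $m^\star=\epsilon\hat\varphi_\epsilon g^\top\nabla\varphi_\epsilon$. Using the product rule and the backward and forward equations, we verify that $\rho^\star$ solves~\eqref{eq:VA_FPK}. The endpoint conditions~\eqref{eq:VA_Eulerian_endpoints} are exactly~\eqref{eq:Schr-bc}.

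\textbf{Main obstacle: the no-flux condition~\eqref{eq:VA_no_flux} for $(\rho^\star,m^\star)$.} The boundary conditions are asymmetric: $\varphi_\epsilon$ carries the oblique condition along $r$, and $\hat\varphi_\epsilon$ carries the normal no-flux condition. Expanding the flux gives
\[
\varphi_\epsilon\big(b_\epsilon\hat\varphi_\epsilon\omega-\tfrac{\epsilon}{2}\diver(G\hat\varphi_\epsilon\omega)\big)\cdot n+\epsilon\hat\varphi_\epsilon\omega\,(G\nabla\varphi_\epsilon)\cdot n-\tfrac{\epsilon}{2}\hat\varphi_\epsilon\omega\,(G\nabla\varphi_\epsilon)\cdot n .
\]
The first term vanishes because $\hat\varphi_\epsilon\in\mathrm{Dom}^*_{n,\mathrm m}$. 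The remaining terms are proportional to $n^\top G\nabla\varphi_\epsilon$. By~\eqref{eq: reflection field}, $n^\top G\nabla\varphi_\epsilon=\|g^\top n\|\,r\cdot\nabla\varphi_\epsilon$, which is zero because $\varphi_\epsilon\in\mathrm{Dom_r}$. This identity, together with $G$ being symmetric, is exactly why the intrinsic reflection~\eqref{eq: reflection field} is the right choice.

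With feasibility established, equality in Young's inequality holds because $m^\star=\rho^\star g^\top\nabla\lambda^\star$. Hence $\mathcal J(\rho^\star,m^\star)=\mathcal F(\lambda^\star)$, so both are optimal and strong duality holds. Uniqueness of the primal minimizer follows from strict convexity of $(\rho,m)\mapsto\|m\|^2/\rho$ on $\{\rho>0\}$, together with linearity of the constraints.

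Smoothness of $\rho^\star$ and $m^\star$ on $(0,t_f)\times\overline{\mathcal X}$ follows from Proposition~\ref{lem:kernel}, since $\varphi_\epsilon$ and $\hat\varphi_\epsilon$ are integrals against the smooth kernel $p^r$. Hypoellipticity under Assumption~\ref{ass:hormander} gives the same conclusion.
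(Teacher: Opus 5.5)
The paper does not prove this theorem; it defers to \cite{adu2026schrodinger_manifold}. Your verification argument is the natural route, and most of it is correct. The weak-duality chain and the interior Fokker--Planck computation for $\rho^\star=\varphi_\epsilon\hat\varphi_\epsilon$ are right. So is the key no-flux check via $n^\top G\nabla\varphi_\epsilon=\|g^\top n\|\,r\cdot\nabla\varphi_\epsilon=0$. In the weak-duality step, it is this same identity that kills the boundary term $\frac{\epsilon}{2}\rho\,\omega\,(G\nabla\lambda)\cdot n$, not the appeal to the generator/adjoint pairing.

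One step is garbled. With the statement's normalizations ($\|m\|^2/(2\rho)$ in $\mathcal J$, coefficient $\frac12$ on $\langle\nabla\lambda,G\nabla\lambda\rangle$ in $\Lambda$), $\lambda^\star=\epsilon\log\varphi_\epsilon$ gives \emph{exact} equality in the inequality defining $\Lambda$. There is no residual $\epsilon$-scaling to match. Importing the $1/(2\epsilon)$ of \eqref{eq: into_minimum_energy} would be inconsistent: it would change the dual constraint and make $\log\varphi_\epsilon$, not $\epsilon\log\varphi_\epsilon$, the optimizer.

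\textbf{The genuine gap is uniqueness.} The map $(\rho,m)\mapsto\|m\|^2/\rho$ is positively $1$-homogeneous, hence convex but \emph{not} strictly convex. For two feasible pairs, equality in the midpoint inequality holds exactly when $m_1/\rho_1=m_2/\rho_2$. So convexity plus linear constraints only shows that two minimizers share a velocity field. Concluding $\rho_1=\rho_2$ would still need uniqueness for the linear Fokker--Planck equation driven by that field, which you do not have for a velocity known only to lie in $L^2(\rho\,d\mathrm m\,dt)$.

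The repair uses your own chain:
\begin{enumerate}
\item Any minimizer satisfies $\mathcal J(\rho,m)=\mathcal F(\lambda^\star)$. This forces equality in Young's inequality, i.e.\ $m=\rho\,g^\top\nabla\lambda^\star$, with $m=0$ on $\{\rho=0\}$ by finiteness of the cost.
\item So $\rho$ is a weak solution, with datum $\rho_0$ and condition \eqref{eq:VA_no_flux}, of the linear equation with the explicit drift $b_\epsilon+G\nabla\lambda^\star$.
\item Uniqueness then follows by duality against $\partial_t\zeta+\mathcal L_\epsilon\zeta+G\nabla\lambda^\star\cdot\nabla\zeta=0$, $r\cdot\nabla\zeta=0$. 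This equation is solved by the $h$-transform $\zeta(t)=\varphi_\epsilon(t)^{-1}P^r_{s-t}\bigl(\varphi_\epsilon(s)\psi\bigr)$, where $P^r_t\psi(x):=\int_{\ol{\mathcal X}}\psi(y)\,p^r_{t,\epsilon}(x,y)\,d\mathrm m(y)$. The oblique condition survives precisely because $r\cdot\nabla\varphi_\epsilon=0$.
\end{enumerate}

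Finally, your smoothness claim presupposes that $\varphi_\epsilon$ and $\hat\varphi_\epsilon$ are integrals against $p^r$. That is not among the hypotheses, so it needs justification. For $\varphi_\epsilon$ it follows from It\^o's formula along the reflected reference process, where the $d\eta$ term drops because $r\cdot\nabla\varphi_\epsilon=0$. For $\hat\varphi_\epsilon$, use the adjoint pairing.
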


Note that the optimal reflected feedback is $u_\epsilon^\star=\frac{m_\epsilon^\star}{\rho_\epsilon^\star}
=
\eps\,g^\top \nabla \log \varphi_\epsilon$,
where $\varphi_\epsilon$ together with $\hat\varphi_\epsilon$ solve~\eqref{eq:Schr-system-thm}.
\begin{figure}[t]
\centering
\begin{tikzpicture}[node distance=2cm, auto, >=stealth]
    \node (phi0) {$\hat{\varphi}_0$};
    \node (phi1) [right of=phi0] {$\hat{\varphi}_1$};
    \node (varphi1) [below of=phi1] {$\varphi_1$};
    \node (varphi0) [below of=phi0] {$\varphi_0$};
    
    \draw[->] (phi0) -- (phi1) 
        node[midway, above] {$\displaystyle\int$ with b.c. 
        $\big\langle b_\epsilon \hat\varphi\omega - \frac{\epsilon}{2}\mathrm{div}(G\hat\varphi\omega,\, n\big\rangle\big|_{\partial\mathcal{X}} = 0$};
    
    \draw[->] (phi1) -- (varphi1) 
        node[midway, right] {$\rho_1 \oslash \hat{\varphi}_1$};
    
    \draw[->] (varphi1) -- (varphi0) 
        node[midway, below] {$\displaystyle\int$ with b.c. 
        $\big\langle r,\, \nabla\varphi\big\rangle\big|_{\partial\mathcal{X}} = 0$};
    
    \draw[->] (varphi0) -- (phi0) 
        node[midway, left] {$\rho_0 \oslash \varphi_0$};
\end{tikzpicture}
\caption{Iterative structure of the reflected Schr\"odinger system~\eqref{eq:Schr-bwd}-\eqref{eq:Schr-bc}. 
The forward evolution (top) propagates the dual factor $\hat{\varphi}$ under 
the no-flux condition $\big\langle b_\epsilon \hat\varphi\omega - \frac{\epsilon}{2}\mathrm{div}(G\hat\varphi\omega,\, n\big\rangle\big|_{\partial\mathcal{X}} = 0$, 
ensuring probability mass is conserved within the bounded domain. 
The backward evolution (bottom) propagates the Schr\"odinger factor $\varphi$ 
under the oblique Neumann condition $\big\langle r,\, \nabla\varphi\big\rangle\big|_{\partial\mathcal{X}} = 0$, encoding the 
geometric constraint that the reflection field $r$ lies in the horizontal 
distribution. The Hadamard division $\oslash$ enforces the endpoint coupling 
\eqref{eq:Schr-bc} at each iteration.}
\label{fig:reflected_Schrodinger_recursion}
\end{figure}
\section{Numerical Example}\label{sec:Numerical}
We demonstrate our theoretical result with an example. For a non-trivial demonstration, the minimum dimension is $3$ because one can genuinely have non-holonomic constraints with underactuated systems.

\begin{figure}[!t]
\centering
\makebox[\columnwidth][c]{%
    \subfloat[Initial Gaussian $\rho_0$%
    \label{fig:rho0}]{%
        \includegraphics[width=0.4\columnwidth]{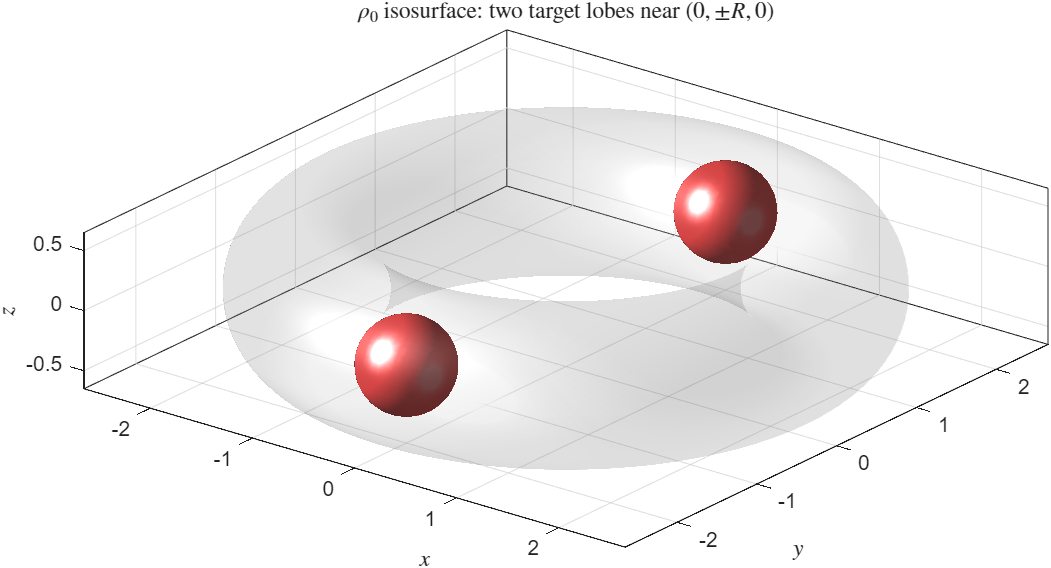}%
    }%
    \hfil
    \subfloat[Final ring $\rho_f$%
    \label{fig:rhof}]{%
        \includegraphics[width=0.4\columnwidth]{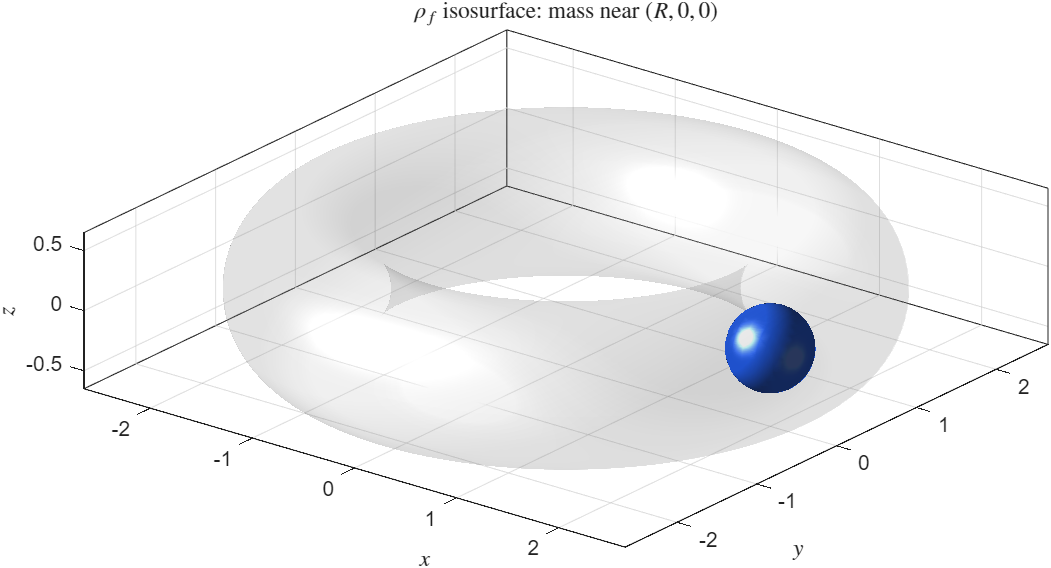}%
    }%
}
\caption{Isosurfaces of the initial and final densities defined
in~\eqref{eq: initial density} and~\eqref{eq: final density},
respectively.}
\label{fig:boundary_densities}
\end{figure}
\begin{example}
\label{ex: unified_example}
Consider $(\overline{\mathcal{X}},\mathrm D,\mathrm d)$ where $\mathcal{X}$ is defined through $\xi(x_1,x_2,x_3)=a^2-\left(\sqrt{x_1^2+x_2^2}-R\right)^2-x_3^2$  so that $\overline{\mathcal X}$ is a solid torus, the horizontal subbundle
\begin{equation}\label{eq: example_horizontal}
\mathrm D(x) := \operatorname{Span}\left\{
\begin{pmatrix}1\\0\\-2\alpha x_2\end{pmatrix},
\begin{pmatrix}0\\1\\2\alpha x_1\end{pmatrix}
\right\},
\end{equation}
where $\alpha>0$, is endowed with the metric $\mathrm d_x(v,v) = v^\top G(x)^\dagger v$, for \(v\in \mathrm D(x)\) and $x\in\overline{\mathcal{X}}$, where
\begin{equation}\label{eq: diffusion_mat_value_normal}
G(x)=
\begin{pmatrix}
1 & 0 & -2\alpha x_2\\
0 & 1 & 2\alpha x_1\\
-2\alpha x_2 & 2\alpha x_1 & 4\alpha^2(x_1^2+x_2^2)
\end{pmatrix}.
\end{equation}
Then, the Popp measure $\mathrm{m}$ has density $(4\alpha)^{-1}$ with respect to Lebesgue measure on $\R^3$~\cite[Theorem~20.6]{agrachev2019comprehensive}. Let \begin{equation}\label{eq: initial density}
\rho_0(x)
=
\frac{1}{C_0}
\bigg[
\frac12
\exp\left(
-\frac{x_1^2+(x_2-R)^2+x_3^2}{2\sigma_0^2}
\right)
+
\frac12
\exp\left(
-\frac{x_1^2+(x_2+R)^2+x_3^2}{2\sigma_0^2}
\right)
\bigg]
\mathbf 1_{\overline{\mathcal X}}(x)    
\end{equation}
and
\begin{equation}
\label{eq: final density}
\rho_f(x)
=\frac{1}{C_f}
\exp\left(
-\frac{x_1^2+(x_2-R)^2+x_3^2}{2\sigma_f^2}
\right)
\mathbf 1_{\overline{\mathcal X}}(x),
\end{equation}
be the initial and final densities of $\mu_0$ and $\mu_f$, respectively, where $C_0,C_f$ are the normalizing constants chosen so that $(4\alpha)^{-1}\int_{\overline{\mathcal X}}\rho_0(x)\,dx=1$ and $(4\alpha)^{-1}\int_{\overline{\mathcal X}}\rho_f(x)\,dx=1$
(see Figure~\ref{fig:boundary_densities}). Since 
\begin{equation}\label{eq: normal_vector}
n(x)
=
\left(
-\frac{h x_1}{a s},
-\frac{h x_2}{a s},
-\frac{x_3}{a}
\right), \quad x\in\partial\mathcal{X},
\end{equation}
where $s=\sqrt{x_1^2+x_2^2}$ and $h=s-R$, by reparameterizing the boundary of the torus $\partial\mathcal X$ as $x=x(w,v) =\begin{pmatrix}
(R + a\cos w)\cos v \\ (R + a\cos w)\sin v\\a\sin w    
\end{pmatrix}$, we get the distance
\begin{equation}\label{eq: parameterized_distance}
\operatorname{dist}\bigl(n(x),\mathrm D(x)\bigr) = \frac{|\sin u|}{\sqrt{1 + 4\alpha^2 (R + a\cos u)^2}}.
\end{equation}
However, one can verify that the oblique field
\begin{equation}\label{eq: example_oblique_reflection}
r(x)=
\frac{1}{\sqrt{\frac{h^2+4\alpha^2s^2z^2}{a^2}}}
\biggr(
-\dfrac{h x_1}{a s}+\dfrac{2\alpha x_2x_3}{a},
-\dfrac{h x_2}{a s}-\dfrac{2\alpha x_1x_3}{a},
-\dfrac{4\alpha^2s^2x_3}{a}
\biggl)^{\top}
\end{equation}
satisfies $\operatorname{dist}\bigl(r(x),\mathrm D(x)\bigr)=0$ (see Figure~\ref{fig:normal_oblique_distribution_distance}).

The task is to solve the SBP~\eqref{eq: horizontal subbundle}-\eqref{eq: into_minimum_energy} in $\R^3$, where the velocities of the paths are dictated by $\mathrm D$ in~\eqref{eq: example_horizontal}.

We verify our assumptions as follows; the vector fields in~\eqref{eq: example_horizontal} satisfy Assumption~\ref{ass:regularity}. Secondly,  since $[g_1,g_2]
=
4\alpha\,\partial_z$, we have that $\operatorname{span}\{g_1,g_2,[g_1,g_2]\}
=
\operatorname{span}\{\partial_x,\partial_y,\partial_z\}
=
\mathbb R^3$
holds, at every point $x\in\overline{\mathcal X}$. Thus, Assumption~\ref{ass:hormander} also satisfied. Finally, from~\eqref{eq: normal_vector} and~\eqref{eq: example_oblique_reflection}, Since $\|g(x)^\top n(x)\|^2
=
\frac{h^2+4\alpha^2s^2x_3^2}{a^2}>0$ 
and $\partial\mathcal X$ is compact in $\mathbb{R}^3$, 
we conclude that Assumption~\ref{ass:obliqueness}  is satisfied. Therefore, our setup satisfies Assumptions~\ref{ass:regularity}-\ref{ass:obliqueness}.

Since, for all $x=(x_1,x_2,x_3)\in\mathbb R^3$, we have that $b_\epsilon(x)=\vec{0}\in\mathbb{R}^3$,  the infinitesimal generator for~\eqref{eq:controlled_reflected_SDE} with~\eqref{eq: example_horizontal} is
\begin{equation}
\label{eq: heisenberg-generator}
\mathcal L_\epsilon \varphi=
\frac{\epsilon}{2}
\Big[
\varphi_{x_1x_1}
+
\varphi_{x_2x_2}
-
4\alpha y \varphi_{x_1x_3}
+
4\alpha x \varphi_{x_2x_3}
+
4\alpha^2(x_1^2+x_2^2)\varphi_{x_3x_3}
\Big].
\end{equation}
\begin{figure}[!t]
    \centering
    \includegraphics[width=0.45\textwidth]{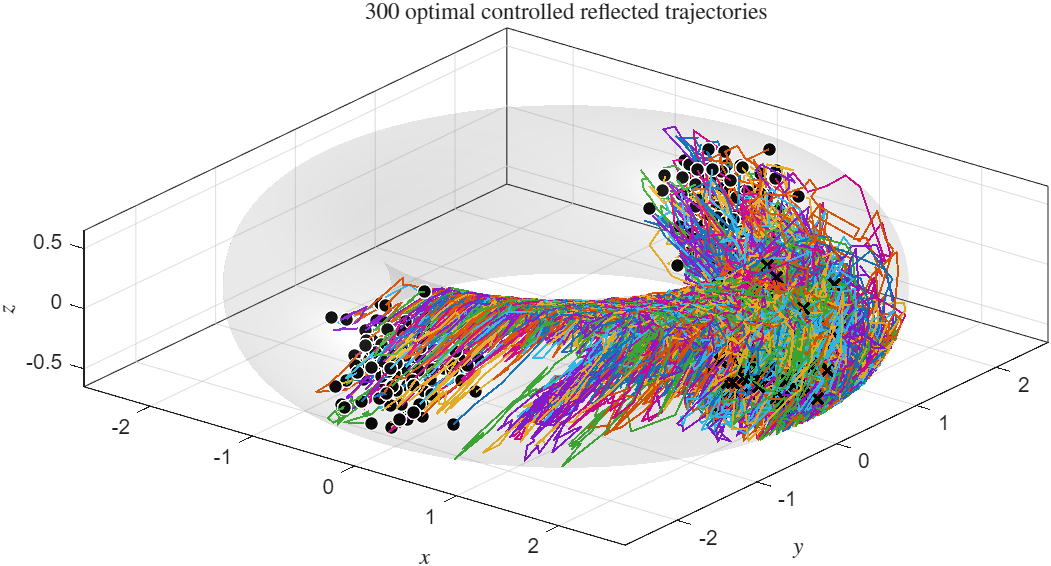}    
    \caption{ 
The sample paths are generated by the controlled reflected dynamics~\eqref{eq: into_minimum_energy}-\eqref{eq:controlled_reflected_SDE} dictated by $\mathrm D$ in~\eqref{eq: example_horizontal} with $\alpha=0.25$. The black circle markers indicate samples drawn from
$\rho_0$ in Figure~\ref{fig:rho0}. 
The trajectories are steered by $u_\epsilon^\star=
\eps\,g^\top \nabla \log \varphi_\epsilon$ toward the prescribed
final distribution and the oblique reflection field $r$ in~\eqref{eq: example_oblique_reflection} keeps the
process inside $\overline{\mathcal X}$ and preserves compatibility with $\mathrm D$ in~\eqref{eq: example_horizontal}. The final black $\times$ marks coincide to the samples of $\rho_f$ in Figure~\ref{fig:rhof}.
}
    \label{fig:controlled oblique reflected SDE}
\end{figure}
\begin{figure}[!t]
    \centering
    \includegraphics[width=0.97\textwidth]{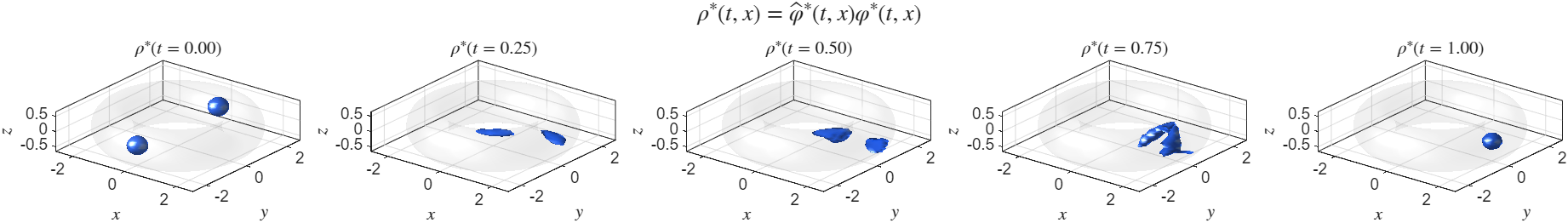}    
    \caption{
This demonstrates the evolution of the optimal density in Figure~\ref{fig:controlled oblique reflected SDE}.
The snapshots show the density isosurfaces at selected times
$t=0,0.25,0.50,0.75,1.00$. This illustrates the density interpolation induced by the optimal feedback
control under oblique reflection.
}
    \label{fig:all_eps_morphing}
\end{figure}

Thus, since  $\mathcal L_\epsilon^*
=
\mathcal L_\epsilon$, we solve 
\begin{equation*}
\partial_t\varphi
+
\frac{\epsilon}{2}
\Big[
\varphi_{x_1x_1}
+
\varphi_{x_2x_2}
-
4\alpha y \varphi_{x_1x_3}
+
4\alpha x \varphi_{x_2x_3}
\\+
4\alpha^2(x_1^2+x_2^2)\varphi_{x_3x_3}
\Big]
=
0\qquad\text{on }\mathcal X
\end{equation*}
with reflected boundary condition \begin{multline*}
 r\cdot\nabla\varphi=\left(
-\frac{hx_1}{s}+2\alpha x_2x_3
\right)\partial_{x_1}\varphi
+
\left(
-\frac{hx_2}{s}-2\alpha x_1x_3
\right)\partial_{x_2}\varphi
-
4\alpha^2s^2x_3\,\partial_{x_3}\varphi
=
0\quad\text{ on $\partial\mathcal X$}   
\end{multline*}
and
\begin{equation*}
\partial_t\hat\varphi
-
\frac{\epsilon}{2}
\Big[
\hat\varphi_{x_1x_1}
+
\hat\varphi_{x_2x_2}
-
4\alpha x_2 \hat\varphi_{x_1x_3}
+
4\alpha x_1 \hat\varphi_{x_2x_3}
+
4\alpha^2(x_1^2+x_2^2)\hat\varphi_{x_3x_3}
\Big]=0\qquad\text{on }\mathcal X.
\end{equation*}
with no-flux boundary condition 
\begin{equation*}
\operatorname{div}(G\hat\varphi)\cdot n=0\quad\text{on $\partial\mathcal X$}   
\end{equation*}
where
\[
\operatorname{div}(G\hat\varphi)
=
\begin{pmatrix}
\hat\varphi_{x_1}-2\alpha x_2\hat\varphi_{x_3}\\
\hat\varphi_{x_2}+2\alpha x_1\hat\varphi_{x_3}\\
-2\alpha x_2\hat\varphi_{x_1}+2\alpha x_1\hat\varphi_{x_2}
+4\alpha^2s^2\hat\varphi_{x_3}
\end{pmatrix}.
\]
Once $\varphi_\epsilon$ is obtained through the numerical scheme in Figure~\ref{fig:reflected_Schrodinger_recursion}, the optimal control $u_\epsilon^\star
=
\eps\,g^\top \nabla \log \varphi_\epsilon$ steers~\eqref{eq:controlled_reflected_SDE} in $\R^3$ dictated by $\mathrm D$ in~\eqref{eq: example_horizontal} and is obliquely reflected by~\eqref{eq: example_oblique_reflection}, from~\eqref{eq: initial density} to~\eqref{eq: final density} (see Figure~\ref{fig:controlled oblique reflected SDE}).
\begin{example}
Let
\[
\overline{\mathcal X}
=
\overline{\mathcal B}\times S^1,
\qquad
\overline{\mathcal B}
=
\left\{
(x_1,x_2)\in\mathbb R^2:
x_1^2+x_2^2\leq 1
\right\}.
\]
We consider the horizontal subbundle
\begin{equation}\label{eq: example_unicycle}
\mathrm D(x) := \operatorname{Span}\left\{
\begin{pmatrix}\cos(\theta)\\\sin(\theta)\\0\end{pmatrix},
\begin{pmatrix}0\\0\\1\end{pmatrix}
\right\},
\end{equation}
Since, for  \(x=(x_1,x_2,\theta)\in S^1\times S^1\), the
inward Euclidean unit normal is $n(x)=(-x_1,-x_2,0)$, from~\eqref{eq: example_unicycle}, we get that
\[
g(x)^\top n(x)
=
\begin{pmatrix}
g_1(x)\cdot n(x)\\
g_2(x)\cdot n(x)
\end{pmatrix}
=
\begin{pmatrix}
-x_1\cos\theta-x_2\sin\theta\\
0
\end{pmatrix}
\]
for all \(x=(x_1,x_2,\theta)\in S^1\times S^1\). By reparameterizing $S^1\times S^1$ as $x=(\cos\phi,\sin\phi,\theta)$,
we obtain
\[
g(x)^\top n(x)
=
\begin{pmatrix}
-\cos(\theta-\phi)\\
0
\end{pmatrix}
\]
\[
\|g(x)^\top n(x)\|
=
\left|x_1\cos\theta+x_2\sin\theta\right|
=
\left|\cos(\theta-\phi)\right|.
\]
Therefore, if
\[
\theta=\phi+\frac{\pi}{2}
\qquad\text{or}\qquad
\theta=\phi-\frac{\pi}{2},
\]
then
\[
g(x)^\top n(x)
=
\begin{pmatrix}
0\\
0
\end{pmatrix}
\]
Equivalently, at these boundary configurations, the oblique vectors~\eqref{eq: reflection field} are tangent to \(S^1\times S^1\), violating Assumption~\ref{ass:obliqueness}. One can check that~\eqref{eq: example_unicycle} satisfies Assumptions~\ref{ass:regularity}-\ref{ass:hormander} on $\overline {\mathcal{B}}\times S^1$. Thus, for the given $(\overline {\mathcal{B}}\times S^1,\mathrm D,\mathrm d)$, there is no reflection $r\in \mathrm D$ in~\eqref{eq: example_unicycle} that is uniformly inward-pointing at all points in \(S^1\times S^1\). Hence ~\eqref{eq: horizontal subbundle}-\eqref{eq: into_minimum_energy} are not globally well-defined.
\end{example}

\begin{figure}[!t]
    \centering
    
    \includegraphics[width=0.35\textwidth]{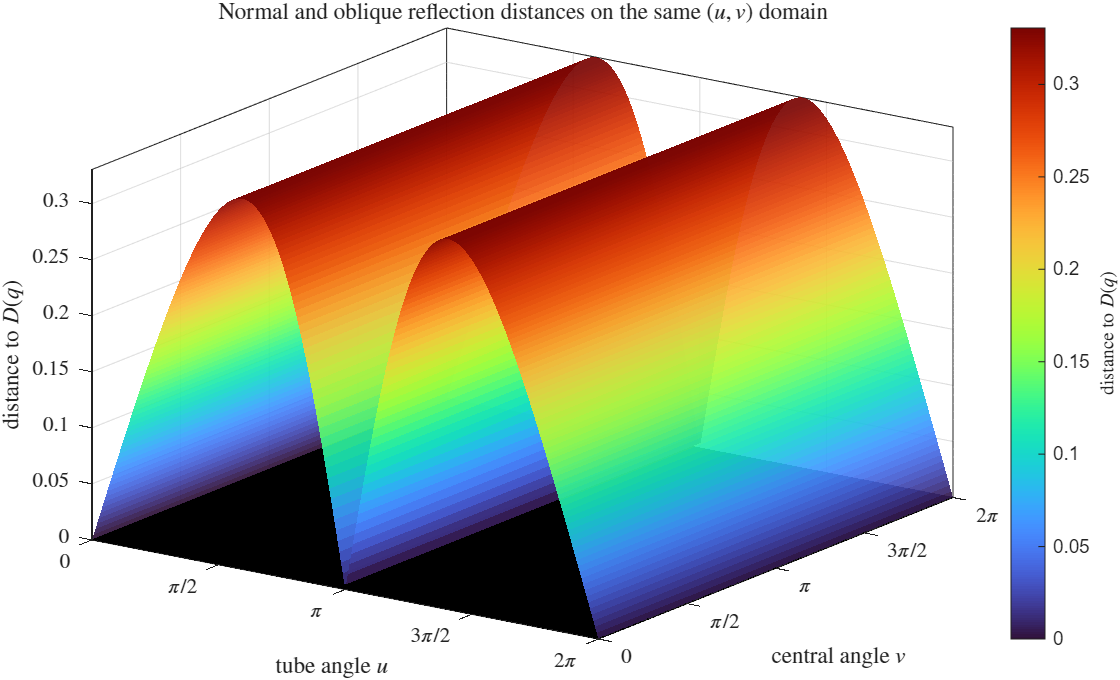}
    \caption{The above compares the reparameterized distance in~\eqref{eq: parameterized_distance} of the two reflection fields, the normal field $n$ in ~\eqref{eq: normal_vector} and the oblique field $r$ in~\eqref{eq: example_oblique_reflection} from the horizontal
distribution $\mathrm D$ in~\eqref{eq: example_horizontal}. The normal field $n$ has a nonzero distance from $\mathrm D(x)$ except on the
equatorial boundary curves $\sin(u)=0$ in~\eqref{eq: parameterized_distance}. However, the oblique field $r$ has distance zero.}
    \label{fig:normal_oblique_distribution_distance}
\end{figure}
\end{example}

\section{Conclusion}\label{sec: conclusion}
We have studied SBP over $(\overline{\mathcal{X}},\mathrm D,\mathrm d)$. We have provided conditions under which the unique solution exists. The consequence is a PDE-based Sinkhorn scheme with asymmetric boundary conditions. We state here that our result holds when  an additional smooth drift is added to~\eqref{eq: horizontal subbundle}-\eqref{eq: into_minimum_energy}. Our setup discourages boundary sticking. We can add the Wentzell oblique sticky condition~\cite{wentzell1959boundary} which to our knowledge gives the first SB with sticky boundary. Another direction is to extend the generative modelling problem for reflected Riemannian diffusions in~\cite{deng2024reflected} to the case of sub-Riemannian diffusions in~\eqref{eq:reference_SDE} and~\eqref{eq:controlled_reflected_SDE} and also flow match~\cite{adu2025flow,chen2024flow} with a sticking sub-Riemannian diffusion. This work can be used as an approximating mechanism for optimal transport over a compact sub-Riemannian manifold. All these are currently work-in-progress. 



\bibliographystyle{plain} 
\bibliography{references} 

\end{document}